\newtheorem{definition}{Definition}
\newtheorem{proposition}[definition]{Proposition}
\newtheorem{theorem}[definition]{Theorem}
\newtheorem{lemma}[definition]{Lemma}
\newcommand{\card}[1]{|#1|}
\newcommand{\comment}[1]{}
\newcommand{\es}{\emptyset}
\newcommand{\set}[1]{\left\{#1\right\}}
\newcommand{\emtext}[1]{\text{\em #1}}
\newcommand{\sm}{\setminus}
\newcommand{\eqnum}{\leavevmode\hfill\refstepcounter{equation}\textup{\tagform@{\theequation}}}
\tikzstyle{hvertex}=[thick,circle,inner sep=0.cm, minimum size=2mm, fill=white, draw=black]
\tikzstyle{hedge}=[very thick]
\tikzstyle{rededge}=[ultra thick,red]
\tikzstyle{pathedge}=[hedge,decorate, decoration={random steps,segment length=3pt,amplitude=1pt}]
\title{$h$-perfect plane triangulations}
\author{Yohann Benchetrit and Henning Bruhn}
\date{}
\begin{document}
\maketitle

\begin{abstract}
We characterise $t$-perfect plane triangulations by forbidden induced subgraphs. As a consequence, we obtain that a plane triangulation is $h$-perfect if and 
only if it is perfect. 
\end{abstract}

\section{Introduction}

As defined by Berge, a graph is \emph{perfect} if for each of its induced subgraphs, the 
chromatic number is equal to the clique number. Results of  
Fulkerson~\cite{Fulkerson72}, Lov\'asz \cite{Lovasz1972} and Chv\'atal~\cite{Chvatal75} showed  that perfection could equally be characterised in polyhedral terms:
a graph is perfect if and only its
stable set polytope (the convex hull of its stable sets) is fully 
described by non-negativity and clique constraints (we defer 
precise definitions 
to the 
next section). 

The polyhedral setting for perfection suggested several generalisations.
A graph is \emph{$h$-perfect} if its stable set polytope is defined by non-negativity,
clique and odd-cycle constraints;  the $K_4$-free $h$-perfect graphs are called \emph{$t$-perfect}.

In this note, we show that $h$-perfection and perfection are equivalent for plane triangulations.
\begin{theorem}\label{hptrigthm}
A plane triangulation is $h$-perfect if and only if it is perfect. 
\end{theorem}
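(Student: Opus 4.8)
The plan is to prove the two implications separately; the forward one is general and short, while all the work is in the reverse one.

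\smallskip
\emph{Perfect implies $h$-perfect.} This holds for every graph $G$. If $G$ is perfect then, by the cited results of Fulkerson, Lov\'asz and Chv\'atal, its stable set polytope $\mathrm{STAB}(G)$ equals $\{x\ge 0:\ x(K)\le 1\text{ for every clique }K\}$. Every odd-cycle inequality is valid for the stable set polytope, so additionally imposing the odd-cycle inequalities cannot shrink the polytope below $\mathrm{STAB}(G)$; as the resulting polytope is in any case contained in $\mathrm{STAB}(G)$, the two coincide and $G$ is $h$-perfect.

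\smallskip
\emph{$h$-perfect implies perfect.} I argue the contrapositive: a plane triangulation $G$ that is not perfect is not $h$-perfect. By the strong perfect graph theorem, $G$ contains an induced odd hole or an induced odd antihole. But $\overline{C_{2k+1}}$ with $k\ge 4$ has more than $3n-6$ edges and hence is non-planar, and $\overline{C_7}$ is non-planar as well (it contains a subdivision of $K_{3,3}$); since $\overline{C_5}=C_5$, it follows that $G$ has an induced odd hole. Because $h$-perfection is inherited by induced subgraphs (the stable set polytope of an induced subgraph is a face of $\mathrm{STAB}(G)$), it suffices to find in $G$ an induced subgraph that is not $h$-perfect; my target is the $5$-wheel $W_5$, which fails to be $h$-perfect because $2x_{h}+\sum_{i=1}^{5}x_{r_i}\le 2$ is facet-defining for $\mathrm{STAB}(W_5)$ (it is tight at the singleton hub and at the five non-adjacent pairs on the rim) and is neither a non-negativity, a clique, nor an odd-cycle inequality.

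\smallskip
To produce an induced $W_5$, I would first reduce to the $4$-connected case. A separating triangle of $G$ is a clique cutset, and perfection is preserved under clique sums, so non-perfectness survives in one of the two sides of the split; that side is again a plane triangulation and an induced subgraph of $G$, with fewer vertices. Iterating, one reaches an induced, non-perfect plane triangulation $G^{\ast}$ with no separating triangle. Then $G^{\ast}\ne K_4$ (which is perfect), so $G^{\ast}$ has at least five vertices and is therefore $K_4$-free, since a $K_4$ inside a larger triangulation always yields a separating triangle. In a $K_4$-free plane triangulation there are no vertices of degree $3$ (the link of such a vertex is a triangle, giving a $K_4$), and every vertex of degree $5$ has a chordless link (a chord of its $5$-cycle link joins two common link-neighbours of a third link-vertex, producing a $K_4$); hence a degree-$5$ vertex together with its link is an induced $W_5$. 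It remains to prove the structural core: \emph{a non-perfect $4$-connected $K_4$-free plane triangulation contains a vertex of degree $5$.} Here one takes an induced odd hole $C$ of $G^{\ast}$ and analyses the triangulation of the disc it bounds, aiming to locate an interior vertex whose neighbourhood closes the three consecutive hole-vertices it dominates into a chordless $C_5$; small cases suggest this always works, which would finish the proof.

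\smallskip
The main obstacle is precisely this last lemma: one must rule out a $K_4$-free triangulation that hosts an induced odd hole but in which every interior and exterior vertex has degree $4$ or at least $6$ and no vertex dominates a chordless $C_5$. I expect a local, discharging-flavoured analysis around the hole to be needed, and it may well be cleaner to prove the sharper statement advertised in the abstract — a forbidden-induced-subgraph description of $t$-perfect plane triangulations — and then derive Theorem~\ref{hptrigthm} via the clique-sum reduction above, with $W_5$ (possibly alongside a short explicit list of further non-$h$-perfect triangulations) playing the role of the forbidden configuration.
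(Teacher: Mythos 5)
Your outer scaffolding is sound and in fact coincides with the paper's: perfect~$\Rightarrow$~$h$-perfect is the standard polyhedral observation, planarity excludes all odd antiholes other than $C_5$, and the reduction along separating triangles via Chv\'atal's clique-cutset lemma (Lemma~\ref{cliquecutset}) is exactly how the paper passes between Theorem~\ref{hptrigthm} and the $K_4$-free case. The problem is the ``structural core'' you defer to the end, and it is not merely unproven --- it is false. The graph of Figure~\ref{nooddwheelfig} is a plane triangulation on $11$ vertices that is $K_4$-free and has \emph{no} separating triangle (every edge has exactly two common neighbours of its ends, so every $3$-clique bounds a face); it contains the induced odd hole $v_0v_1v_2v_3v_4$ and hence is imperfect; yet it contains no induced odd wheel whatsoever, since its six vertices of degree~$4$ have $C_4$ links, its five vertices of degree~$6$ have chordless $C_6$ links, and it has no vertex of degree~$3$ or~$5$ at all. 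So after your clique-cutset reduction you can land on a $4$-connected, $K_4$-free, imperfect triangulation in which your target configuration (an induced $W_5$, or any induced odd wheel) simply does not occur, and the argument cannot be completed along these lines.

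What is true --- and is the content of the paper's key Lemma~\ref{sepoddholes} --- is that a plane triangulation containing an odd hole contains either a separating triangle or an induced \emph{loose} odd wheel: a hole plus a vertex with at least three neighbours on it such that an odd number (at least three) of the segments between consecutive neighbours are odd. These subdivided wheels are the right certificates: they $t$-contract to genuine odd wheels (Lemma~\ref{wheelem}), and $t$-contraction preserves $t$-perfection, so their presence refutes $t$-perfection and (in the $K_4$-free setting) $h$-perfection. In the example above, the $5$-hole together with the vertex seeing only four of its five vertices is such a loose odd wheel. Your closing instinct that the forbidden list must be enlarged beyond $W_5$ is therefore correct, but the enlargement is an infinite family of subdivided wheels rather than a finite list of extra triangulations, and proving Lemma~\ref{sepoddholes} --- by choosing an odd hole with fewest vertices in its interior and analysing the triangulated disc it bounds --- is where essentially all of the work of the paper lies.
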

 To our knowledge, this is the first example of a rather wide class for which these two notions coincide. 
The equivalence is a consequence of a characterisation of $t$-perfect plane triangulations,
see Theorem~\ref{trigthm} below. 
This in itself is remarkable, 
as a characterisation of $h$-perfection in a class of graphs  does not normally 
follow directly from one for $t$-perfection.

The Strong Perfect Graph theorem~\cite{SPGT} gives a structural 
characterisation of perfect graphs in terms of minimal imperfect obstructions:
a graph is perfect if and only if it contains neither an odd hole
(an induced odd cycle other than a triangle) nor the complement of an odd hole.
An analogous characterisation for $t$-perfect or $h$-perfect graphs is not known.
Our second main result is such a characterisation of $t$-perfect
plane triangulations.

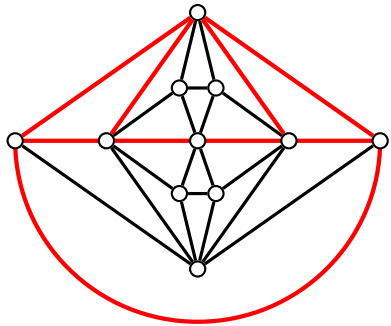
\begin{figure}[ht]
\centering
\begin{tikzpicture}[scale=1]

\def\xstep{1.2}
\def\ystep{1}

\draw[rededge] (0,0) -- (4*\xstep,0);
\foreach \i in {0,...,4}{
  \node[hvertex] (v\i) at (\i*\xstep,0){};
}

\node[hvertex] (A) at (1.8*\xstep,0.7*\ystep){};
\node[hvertex] (B) at (2.2*\xstep,0.7*\ystep){};
\node[hvertex] (C) at (2*\xstep,1.7*\ystep){};
\draw[hedge] (A) -- (B) -- (C) -- (A);
\draw[hedge] (A) edge (v1) edge (v2);
\draw[hedge] (B) edge (v2) edge (v3);
\draw[rededge] (C) edge (v0) edge (v1) edge (v3) edge (v4);

\node[hvertex] (a) at (1.8*\xstep,-0.7*\ystep){};
\node[hvertex] (b) at (2.2*\xstep,-0.7*\ystep){};
\node[hvertex] (c) at (2*\xstep,-1.7*\ystep){};
\draw[hedge] (a) -- (b) -- (c) -- (a);
\draw[hedge] (a) edge (v1) edge (v2);
\draw[hedge] (b) edge (v2) edge (v3);
\draw[hedge] (c) edge (v0) edge (v1) edge (v3) edge (v4);

\begin{scope}[on background layer]
\draw[rededge,bend right=90] (v0) arc (180:360:2*\xstep);
\end{scope}
\end{tikzpicture}

\caption{A plane triangulation that is $t$-imperfect but that does not contain
any induced odd wheel. A loose odd wheel is shown in red.}
\label{nooddwheelfig}
\end{figure}

Which (minimal) $t$-imperfect graphs could occur in a plane triangulation?
Odd wheels 
form a family of minimally t-imperfect graphs (see
for instance Schrijver~\cite[Ch.~68]{LexBible}), and they are planar. 
Consequently, a planar $t$-perfect graph cannot contain any odd wheel as 
an induced subgraph ($t$-perfection is closed under taking induced subgraphs). 
Excluding odd wheels, however, is not enough to ensure $t$-perfection, 
even for plane triangulations (see Figure~\ref{nooddwheelfig}). 
Rather, we have to  exclude certain subdivisions of odd wheels as well. 

For this,
we call a \emph{loose odd wheel} any 
graph that is obtained from an odd wheel by a subdivision of the edges 
of the rim, where the number of edges that are subdivided
an even number of times is odd, and at least three. (Again, more precise
definitions follow in the next section.)  
We prove:

\begin{theorem}\label{trigthm}
For every plane triangulation $T$, the following statements are equivalent:
\begin{enumerate}[\rm (i)]
\item\label{M1} $T$ is $t$-perfect;
\item\label{M2}  $T$ does not contain any loose odd wheel as an induced subgraph; and 
\item\label{M3} $T$ is perfect and $K_4$-free.
\end{enumerate}
\end{theorem}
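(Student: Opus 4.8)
\medskip
\noindent\textbf{Proof plan.}
The plan is to prove the cycle \ref{M3}$\Rightarrow$\ref{M1}$\Rightarrow$\ref{M2}$\Rightarrow$\ref{M3}. The implication \ref{M3}$\Rightarrow$\ref{M2} then follows formally, but it is in any case almost immediate: a loose odd wheel that is not $K_4$ has a chordless rim that is an odd cycle of length at least~$5$, hence an induced odd hole, so no perfect $K_4$-free graph can contain an induced loose odd wheel. For \ref{M3}$\Rightarrow$\ref{M1} I would argue polyhedrally: if $T$ is perfect then its stable set polytope is defined by non-negativity and all clique inequalities; if moreover $T$ is $K_4$-free then every clique has at most three vertices, so these clique inequalities are exactly the edge and triangle inequalities. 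Since the triangle inequalities are among the odd-cycle inequalities, the odd-cycle relaxation sits between the stable set polytope and the clique relaxation; all three therefore coincide, and $T$ is $t$-perfect.

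For \ref{M1}$\Rightarrow$\ref{M2}, since $t$-perfection passes to induced subgraphs, it suffices to show that every loose odd wheel $L$ is $t$-imperfect. Writing $h$ for the hub and $R$ for the rim, an odd cycle of length $2\ell+1$, I would use the point $x$ that gives every rim vertex the value $\ell/(2\ell+1)$ (so the odd-cycle inequality of $R$ is tight) and gives $h$ the largest value still compatible with the odd-cycle inequalities of the cycles through $h$ -- such cycles exist because, by the ``loose'' condition, some arc of $R$ between two consecutive spoke ends has odd length. A routine check shows that $x$ satisfies all edge and all odd-cycle (in particular triangle) inequalities; on the other hand the ``loose'' condition forces at least three arcs of odd length, and this makes $\mathbf{1}^{\top}x$ strictly exceed the stability number $\alpha(L)$, so $x$ lies outside the stable set polytope and $L$ is $t$-imperfect.

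The real content is \ref{M2}$\Rightarrow$\ref{M3}. That $T$ is $K_4$-free is immediate, since $K_4$ is itself a loose odd wheel (take $W_3$ and subdivide none of its rim edges). To see that $T$ is perfect, suppose it is not. Odd antiholes are non-planar -- for $\overline{C_7}$ because a plane embedding with $7$ vertices and $14$ edges would need at least eight triangular faces whereas $\overline{C_7}$ has only seven triangles, and for $\overline{C_n}$ with $n\ge 9$ because then $|E|>3|V|-6$ -- so by the Strong Perfect Graph Theorem \cite{SPGT} the graph $T$ contains an odd hole $C$, of length at least~$5$. Among all odd holes of $T$, choose $C$ together with one of the two discs it bounds so as to minimise the number of vertices strictly inside that disc $D$. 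As $C$ is chordless of length $\ge 5$ and the inside of $D$ is triangulated, $D$ has at least one interior vertex; and if it has exactly one, say $v$, then every boundary edge of $C$ lies in a triangular face whose third vertex is not on $C$ (otherwise $C$ would have a chord), hence equals $v$, so $v$ is adjacent to all of $C$ and $T[V(C)\cup\{v\}]$ is the odd wheel $W_{|C|}$, an induced loose odd wheel -- a contradiction.

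So $D$ has at least two interior vertices, and handling this is the main obstacle. Here I would look for a hub among the interior vertices of $D$ and an induced odd cycle to serve as the rim -- not necessarily the link of that hub, and in general not $C$ itself -- so that the neighbours of the hub on the rim cut it into arcs at least three of which have odd length (the number of odd arcs is automatically odd because the rim is an odd cycle), while the rim stays chordless and the hub has no further neighbour on it. The triangulation of $D$ supplies the cyclic link structure to work with, $K_4$-freeness constrains how a link can have a chord, and the minimality of $D$ should rule out configurations producing an odd hole inside a strictly smaller disc. The delicate part is the parity bookkeeping: securing three odd arcs simultaneously with chordlessness of the rim and exactness of the hub's neighbourhood. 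I expect this needs a careful case analysis of the structure near $C$, or an inductive peeling of the layers of $D$, and is where the bulk of the work lies.
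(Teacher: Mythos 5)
Your implications \eqref{M3}$\Rightarrow$\eqref{M1} and \eqref{M1}$\Rightarrow$\eqref{M2} are fine. The first coincides with the paper's argument. For the second you replace the paper's route (a $t$-contraction of each segment, reducing a loose odd wheel to an odd wheel, following Gerards and Shepherd) by an explicit fractional point; that works: with rim values $\ell/(2\ell+1)$ and $x_h=(m_0+1)/(2\ell+1)$ for $2m_0+1$ the shortest odd segment, all edge and induced odd-cycle inequalities hold, while $\mathbf 1^\top x=\ell+x_h>\ell$, and the hypothesis of at least three odd segments is exactly what forces $\alpha=\ell$. Both routes are standard and of comparable length.

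The problem is \eqref{M2}$\Rightarrow$\eqref{M3}, which you correctly identify as the real content but do not actually prove. After reducing to an odd hole $C$ bounding a disc $D$ with a minimal number of interior vertices, you settle only the case of at most one interior vertex and then write that the case of two or more interior vertices ``needs a careful case analysis \dots and is where the bulk of the work lies.'' That deferred case \emph{is} the theorem: in the paper it occupies essentially the entire proof of Lemma~\ref{sepoddholes} (the claim that some interior vertex has at least three neighbours on $C$, the construction of the common neighbours $w_i$ and the link paths $R_i$, the parity facts \eqref{oddlength} and \eqref{wiwj}, and the final chord analysis of the cycles $C_1$ and $C_N$). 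A plan that says ``find a hub and an odd rim with three odd arcs'' does not substitute for this argument. Moreover, your plan omits the structural reduction that makes the paper's argument feasible: one first disposes of separating triangles via Chv\'atal's clique-cutset lemma (Lemma~\ref{cliquecutset}) and a minimal counterexample, so that in the hard case $T$ may be assumed to have \emph{no} separating triangle; that assumption is invoked repeatedly in Lemma~\ref{sepoddholes} to guarantee that the various paths and cycles constructed are induced. Without it, the chord bookkeeping you anticipate would be substantially worse. As written, the proposal establishes two of the three implications and leaves the third as an unproven programme.
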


Combining the two theorems, it is straightforward to show  that {a plane triangulation is $h$-perfect if and only if it does not contain any loose odd wheel other than $K_4$ as an induced subgraph}.

\smallskip
The  $h$-perfect graphs  have received 
some attention due to their algorithmic 
properties, which they share to some degree with perfect graphs. In particular, 
Gr\"otschel, Lov\'asz and Schrijver \cite{Grotschel1986} showed that 
a maximum-weight stable set of an $h$-perfect graph can be found in polynomial-time using the Ellipsoid method. 
Later Eisenbrand et al.~\cite{EFGK02} described  a combinatorial algorithm for $t$-perfect graphs. 
 The class of $t$-perfect graphs is of independent interest as it is the class of graphs whose fractional stable set polytope (that is the polytope defined by non-negativity and edge inequalities) has Chv\'atal rank at most~$1$ (see \cite{LexBible}).

Key questions about $t$-perfect or $h$-perfect graphs are: 
How can they be described
in terms of forbidden substructures? Can they always be coloured with few
colours? Is it possible to recognise them in polynomial time? These questions
have been pursued in a number of works, among 
them~\cite{Gerards89,Shepherd95,GS98,BS12,YohPhD}. 
See also Schrijver~\cite[Ch.~68]{LexBible} for more references.

In the special case of plane triangulations, Theorem~\ref{trigthm} answers the first question and  Theorem~\ref{hptrigthm} immediately answers the other ones: perfect graphs can always be coloured with no more colours than the size of a maximum clique, and they can be recognised in polynomial-time (the planar case is considerably simpler; see Tucker~\cite{Tucker73}).

\section{Basic definitions and facts}\label{sec:defs}

We keep to notation as in Diestel~\cite{diestelBook10}, where also 
all general definitions that we omit here may be found.  We only consider 
graphs that are
undirected, simple and finite.

A graph is \emph{perfect} if for each induced subgraph, the chromatic number and clique number are equal. For each graph $G$, consider the following inequalities over $\mathbb{R}^{V(G)}$:
\begin{align}
\label{nncnstr} x_v\geq 0 &\,\,\text{for every }v\in V(G)\\
\label{edgecnstr} x_u+x_v\leq 1 & \text{ for every edge }uv\in E(G)\\ 
\label{cliquecnstr}
\sum_{v\in K}x_v\leq 1 
& \text{ for every clique }K
\text{ of }G\\
\label{oddcnstr} \sum_{v\in V(C)}x_v\leq \frac{ |V(C)|-1}{2} 
& \text{ for every induced odd cycle }C
\text{ in }G
\end{align}

\noindent
{The graph $G$ is perfect if and only if the polytope described by \eqref{nncnstr} and \eqref{cliquecnstr} is integral} (that is, if all the vertices have integer coordinates only). 

A graph is \emph{$h$-perfect} if the polytope described by the inequalities \eqref{nncnstr}, \eqref{cliquecnstr} and \eqref{oddcnstr} is integral. This is precisely the
case when the polytope is the convex hull of the incidence vectors of the stable sets of $G$. 
A graph is \emph{$t$-perfect} if \eqref{nncnstr}, \eqref{edgecnstr} and \eqref{oddcnstr} define an integral polytope.

{Perfect graphs are $h$-perfect} (since they cannot contain induced odd cycles of length greater than~$3$), and it is straightforward to show that {$t$-perfect graphs are exactly
the $h$-perfect $K_4$-free graphs}. 

As for perfect graphs, {each induced subgraph of an $h$-perfect graph is $h$-perfect} (this follows from an easy polyhedral argument), and hence the same holds for $t$-perfection.

A \emph{hole} is an induced cycle of length at least~$4$; an
\emph{antihole} is the complement of a hole. 
Odd holes and odd antiholes are imperfect, and indeed
the Strong Perfect Graph theorem~\cite{SPGT} asserts
that these are the only \emph{minimally} imperfect graphs (in this note, \emph{minimally} is with respect to deletion of vertices): 
{a graph is  perfect if and only if it does not contain
an odd hole nor an odd antihole} (as an induced subgraph). 
A similar structural characterisation of $t$-perfection or $h$-perfection
is not known. 

We will use twice a well-known lemma in the context of 
perfect graphs. Its proof is straightforward from Berge's original definition of perfection.
\begin{lemma}[Chv\'atal~\cite{Chvatal75}]\label{cliquecutset}
Let $G$ be a graph and $G_1,G_2$ be two proper induced 
subgraphs of $G$  such that $G=G_1\cup G_2$ and
$G_1\cap G_2$ is a clique. Then $G$ is perfect if and only if $G_1$ and $G_2$
are perfect. 
\end{lemma}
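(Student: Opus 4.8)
The plan is to prove the two implications separately, the forward one being immediate: $G_1$ and $G_2$ are induced subgraphs of $G$, and perfection is inherited by induced subgraphs, so perfection of $G$ gives perfection of $G_1$ and $G_2$. For the converse I would assume $G_1$ and $G_2$ perfect, write $S := V(G_1)\cap V(G_2)$, and argue by induction on $|V(G)|$ (the small cases being trivial). Since being perfect means $\chi(H)=\omega(H)$ for every induced subgraph $H$, it suffices to verify this for each such $H$. The proper induced subgraphs are handled by the induction hypothesis: if $V(H)\subseteq V(G_1)$ or $V(H)\subseteq V(G_2)$, then $H$ is an induced subgraph of a perfect graph and there is nothing to do; otherwise $H_i:=G_i[V(H)\cap V(G_i)]$ for $i=1,2$ are \emph{proper} induced subgraphs of $H$, each an induced subgraph of a perfect graph hence perfect, with $H=H_1\cup H_2$ and $V(H_1)\cap V(H_2)=V(H)\cap S$ inducing a clique, so applying the lemma to $H$ (which has fewer vertices than $G$) gives $\chi(H)=\omega(H)$.

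The only case requiring a genuine argument is $H=G$. Here I would first note that, because $E(G)=E(G_1)\cup E(G_2)$, there is no edge of $G$ between $V(G_1)\setminus S$ and $V(G_2)\setminus S$; hence every clique of $G$ lies within $V(G_1)$ or within $V(G_2)$, so $\omega(G)=\max\{\omega(G_1),\omega(G_2)\}$, say $\omega(G)=\omega(G_1)=:k$. Perfection yields a proper $k$-colouring $c_1$ of $G_1$ and a proper colouring $c_2$ of $G_2$ using $\omega(G_2)\le k$ colours. Since $S$ is a clique, both $c_1$ and $c_2$ are injective on $S$, so after permuting the colour classes of $c_2$ we may assume $c_1$ and $c_2$ agree on $S$. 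Then the map that equals $c_1$ on $V(G_1)$ and $c_2$ on $V(G_2)$ is well defined, and it is a proper $k$-colouring of $G$ because every edge of $G$ lies in $G_1$ or in $G_2$. Thus $\chi(G)\le k=\omega(G)$, and equality follows as always. This closes the induction.

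I do not expect a real obstacle here; this is a classical fact. The two points that need care are the bookkeeping in the induction — making sure that in the reduction step $H_1$ and $H_2$ are genuinely \emph{proper} induced subgraphs of $H$, which is exactly why the case split on whether $V(H)$ is contained in some $V(G_i)$ is needed — and the colour-permutation step that glues $c_1$ and $c_2$ along the clique $S$, which is where the clique hypothesis is essential.
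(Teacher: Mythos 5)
Your argument is correct, and it is precisely the ``straightforward proof from Berge's original definition'' that the paper alludes to without writing out (it only cites Chv\'atal): induct on $\card{V(G)}$, handle proper induced subgraphs by the induction hypothesis, and for $G$ itself observe that $\omega(G)=\max\{\omega(G_1),\omega(G_2)\}$ since no edge joins $V(G_1)\sm S$ to $V(G_2)\sm S$, then glue optimal colourings of $G_1$ and $G_2$ after permuting colours so that they agree on the clique $S$. No gaps; the two points you flag (properness of $H_1,H_2$ in the reduction, and injectivity of both colourings on $S$ enabling the permutation) are exactly the places where care is needed, and you handle both.
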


A \emph{wheel} is a graph formed by a cycle $C$ (the \emph{rim}) together with a vertex $v$ (the \emph{center}) adjacent to every vertex of $C$. A wheel is \emph{odd} if it has an odd rim. 

It is easy to show that {odd wheels are minimally $t$-imperfect}; see
for instance Schrijver~\cite[Ch.~68]{LexBible}. 
Other minimally  $t$-imperfect graphs are even M\"obius ladders,
 and  the circular graph $C_{10}^{2}$; see Figure~\ref{k4}. 
Only odd wheels will be of relevance in this note.

A \emph{loose wheel} is a graph formed by a cycle $C$ 
and a new vertex $v\notin V(C)$ that has at least three
neighbours on $C$. A path of $C$ joining two neighbours of $v$ and which 
does not contain any other neighbour of $v$ is a \emph{segment} of the loose wheel.
The loose wheel is a
\emph{loose odd wheel} 
if $C$ has odd length, and if  at least three of the segments are odd
as well. Obviously, odd wheels are loose odd wheels.

To see that the presence of a loose odd wheel in the triangulation
actually certifies $t$-imperfection, we turn to a useful operation,
found by Gerards and Shepherd~\cite{GS98},
that preserves $t$-perfection. 
In a graph $G$, let $v$ be a vertex whose neighbourhood is stable 
(i.e., that does not induce any edge). Then, performing a \emph{$t$-contraction} at $v$
means contracting all the edges incident with $v$ (deleting parallel 
edges and loops which may arise). Results of \cite{GS98} show that {the resulting graph $G'$ will be $t$-perfect 
if $G$ is}. This implies:

\begin{lemma}\label{wheelem}
	Each graph which contains a loose odd wheel as an induced subgraph is $t$-imperfect.
\end{lemma}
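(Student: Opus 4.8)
The plan is to reduce a loose odd wheel to an odd wheel by a sequence of $t$-contractions, and then invoke the fact that odd wheels are $t$-imperfect together with the fact that $t$-perfection is inherited by induced subgraphs and preserved by $t$-contractions at vertices with stable neighbourhood. Concretely, suppose $G$ contains an induced loose odd wheel $W$, consisting of an odd cycle $C$ and a center $v$ with at least three neighbours on $C$, at least three of the segments being odd. Since $t$-perfection is closed under taking induced subgraphs, it suffices to show that $W$ itself is $t$-imperfect; so I would work entirely inside $W$ from now on.

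First I would handle the segments one at a time. Fix a segment $P$, a path of $C$ between two consecutive neighbours $a,b$ of $v$ whose interior contains no neighbour of $v$. Every interior vertex of $P$ has degree $2$ in $W$ (its two neighbours lie on $P$ and are non-adjacent, since $C$ is an induced cycle in $W$ of length at least $5$ — here I should note that the only way $C$ could fail to be induced is if it is a triangle, but an odd loose wheel with three odd segments has rim length at least $3+3+3-3=6$, wait, let me instead simply observe $C$ has length $\ge 9$, hence is chordless in the triangulation-free argument; in any case $C$ is an induced cycle so consecutive interior vertices of a segment are non-adjacent). Thus I may $t$-contract at an interior vertex of $P$: this shortens $P$ by one edge and changes its parity, while leaving the rest of $W$ and the incidences with $v$ untouched. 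Repeating, I can contract each even segment down to a single edge (parity flips each time, so an even segment becomes... I must be careful: contracting once turns an even-length segment into an odd-length one; I actually want every segment to have length exactly $1$, i.e. to be an edge, so that $v$ is adjacent to every vertex of the resulting rim). So the clean statement is: contract each segment repeatedly until it has length $1$; each such contraction is a valid $t$-contraction because at every stage the vertex being contracted has two non-adjacent neighbours on the current (still induced) cycle.

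After doing this to all segments, the cycle $C$ has been replaced by a cycle $C'$ whose length equals the number of segments, and $v$ is adjacent to every vertex of $C'$; that is, we have obtained exactly a wheel with rim $C'$. It remains to check that $C'$ is odd. Here is where the precise definition of a loose odd wheel does the work: contracting a segment of length $\ell$ down to length $1$ removes $\ell-1$ vertices, and $\ell-1$ is even precisely when $\ell$ is odd. Since the number of odd segments is odd (at least three) and the total rim length $|C|$ is odd, a parity count gives that the number of segments, i.e. $|V(C')|$, is odd — the even-length segments each contribute an even shrink and an even original length, while the odd segments contribute the relevant odd parity; summing, $|V(C')| \equiv |V(C)| \pmod 2$ up to the correction from odd segments, and the "odd number of odd segments" condition is exactly what forces $|V(C')|$ to be odd. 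So the end result is an odd wheel, which is $t$-imperfect, and since $t$-contraction preserves $t$-perfection and $W$ is an induced subgraph of $G$, the graph $G$ is $t$-imperfect as well.

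The main obstacle, and the only genuinely delicate point, is the bookkeeping of parities: one must verify both that each intermediate contraction is legitimate (the contracted vertex always has a stable neighbourhood, which relies on $C$ remaining an induced cycle throughout — true because interior segment vertices are never adjacent to $v$ and $C$ is chordless) and that the definitional clause "the number of segments subdivided an even number of times is odd, and at least three" translates correctly into "the final rim is odd". I would set up the parity calculation once, carefully, with $|V(C')|$ expressed as a sum over segments, and let the two parity hypotheses ($|V(C)|$ odd; an odd number of odd segments) combine to give the conclusion.
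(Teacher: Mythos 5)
Your overall strategy --- reduce the loose odd wheel to an odd wheel by $t$-contractions and invoke closure under induced subgraphs --- is exactly the paper's, but the execution rests on a misreading of what a $t$-contraction does. A $t$-contraction at a vertex $u$ contracts \emph{all} edges incident with $u$; at an interior vertex of a segment (which has exactly two, non-adjacent, neighbours) this identifies the vertex with \emph{both} of its neighbours, so it shortens the segment by \emph{two} edges and \emph{preserves} its parity. It does not shorten the segment by one edge and flip its parity, as you assert. Consequently an even segment can never be brought down to a single edge: the best you can do is contract it to length $0$, i.e.\ identify its two endpoints into one vertex. The paper's proof does exactly this --- each odd segment is shrunk to an edge, each even segment to a vertex --- so the resulting rim has length $k$, the number of \emph{odd} segments, which is odd and at least~$3$ by the definition of a loose odd wheel.

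Independently of the contraction bookkeeping, your intended endpoint is also wrong: if every segment became an edge, the rim of the resulting wheel would have length equal to the \emph{total} number of segments, and that number need not be odd even though the number of odd segments is. For instance, a rim of length~$5$ whose hub-neighbours cut it into segments of lengths $1,1,1,2$ is a loose odd wheel (three odd segments, odd total length) with \emph{four} segments; your plan would aim at the wheel with a rim of length~$4$, which is not an odd wheel (and is in fact $t$-perfect), so no contradiction would follow. The parity fact your argument needs is not ``the number of segments is odd'' but ``the number of odd segments is odd''; the latter is part of the definition, and it is precisely what the paper's scheme of shrinking even segments to points and odd segments to edges exploits.
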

\begin{proof}
	Since $t$-perfection is closed under taking induced subgraphs, we need only to check that a loose odd wheel is $t$-imperfect. We show that it can be $t$-contracted to an odd wheel, which is $t$-imperfect.
	
	The number $k$ of odd segments of a loose odd wheel is odd and at least~$3$. 
	We may use $t$-contraction to shrink each even segment to a single vertex and each odd segment to a single edge. It is easy to check that the graph obtained in this way is isomorphic to the odd wheel with $k+1$ vertices.
\end{proof}


\section{Proofs}\label{sec:proofs}

Let us first show that Theorem~\ref{hptrigthm} is a straightforward consequence of Theorem~\ref{trigthm}.
\begin{proof}[Proof of Theorem~\ref{hptrigthm}]
Perfect graphs are always $h$-perfect (see Section \ref{sec:defs}).

Conversely, 
let $T$ be an $h$-perfect plane triangulation.  
If $T$ is $K_4$-free, then $T$ is $t$-perfect and the conclusion follows from Theorem~\ref{trigthm}. 
Otherwise, $T$ is either equal to $K_4$, in which case it is perfect, or 
one of the triangles $X$ of any $K_4$ of $ T$ is separating. 
Thus, $X$ yields two proper induced subgraphs as in Lemma \ref{cliquecutset} and we are done by induction.
\end{proof}

We will use the following observation:

\begin{lemma}\label{perflem}
A plane triangulation without separating odd holes is perfect.
\end{lemma}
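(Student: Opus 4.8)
The plan is to prove the contrapositive via the Strong Perfect Graph Theorem: assuming $T$ is a plane triangulation that is not perfect, we must exhibit a separating odd hole. Since $T$ is not perfect, it contains an induced odd hole $C$ or an induced odd antihole $\overline{C}$. I would first dispose of antiholes: an odd antihole on $n \geq 7$ vertices contains $K_4$ (for instance, $\overline{C_7}$ already does), but a $K_4$ in a plane triangulation need not immediately contradict planarity; instead one should argue that $\overline{C_n}$ for $n\geq 7$ is nonplanar (it contains a $K_5$ or $K_{3,3}$ subdivision), so it cannot be an induced subgraph of a plane graph at all. This leaves odd holes, so from now on let $C$ be an induced odd hole in $T$ with $|V(C)|\geq 5$.

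The core of the argument is to show that \emph{some} induced odd hole of $T$ is separating. The natural move is to look at the closed disc and the outer region bounded by $C$ in the plane. Since $C$ is an induced cycle of length $\geq 5$ and $T$ is a triangulation, the cycle $C$ cannot itself be a face (a face of a triangulation is a triangle); hence $C$ bounds regions on both sides that contain at least one vertex each — unless $C$ happens to be the outer cycle, but a triangulation has a triangular outer face too, so both sides of $C$ genuinely contain vertices of $T$. That already shows $C$ \emph{separates} $T$ (it is a cycle with vertices strictly inside and strictly outside), so $C$ is a separating odd hole and we are done.

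Let me double-check the one subtle point, which I expect to be the main obstacle: why must \emph{both} sides of $C$ contain a vertex? In a plane triangulation every face, including the outer face, is bounded by a triangle. An induced cycle $C$ of length $\geq 5$ cannot bound any face of $T$ since faces are triangles; so the open region on each side of $C$ meets the graph, i.e.\ contains at least one vertex or at least one chord-edge of $T$. Since $C$ is induced there are no chords, so in fact each side contains a vertex of $V(T)\setminus V(C)$. Therefore $V(T)\setminus V(C)$ is disconnected in $T - V(C)$ (the inside and outside parts cannot be joined by an edge avoiding $C$, by the Jordan curve theorem), so $C$ is a separating cycle. Hence every induced odd hole of a plane triangulation is separating, and a plane triangulation with no separating odd hole has no induced odd hole; combined with the antihole observation it is perfect by the Strong Perfect Graph Theorem.

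Thus the proof reduces to two clean facts — odd antiholes of order $\geq 7$ are nonplanar, and an induced cycle of length $\geq 5$ in a plane triangulation must have vertices on both sides — each of which is elementary. The only place where care is needed is the Jordan-curve step guaranteeing that an edge cannot jump from the interior to the exterior of $C$; this is standard for plane graphs and I would cite Diestel for it rather than reprove it.
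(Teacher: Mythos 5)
Your proposal is correct and follows essentially the same route as the paper: both arguments observe that an induced cycle of length at least $5$ in a plane triangulation cannot bound a (triangular) face on either side and hence must be separating, dispose of odd antiholes by noting that the only planar one is $\overline{C_5}\cong C_5$, and conclude via the Strong Perfect Graph Theorem (for which the paper cites the easier planar special case of Tucker). You merely spell out the Jordan-curve details that the paper compresses into one sentence.
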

\begin{proof}
Such a triangulation $T$ does not contain any odd hole at all, as 
every face boundary is a triangle.
Moreover, $T$ does not contain any odd antihole
either: the only planar odd antihole is the antihole with five vertices, which 
is isomorphic to the cycle of length~$5$. 
Therefore, $T$ is perfect (this follows from 
the Strong Perfect Graph Theorem, in the easier special case of planar~\cite{Tucker73} or $K_4$-free graphs~\cite{Tucker77}). 
\end{proof}

The key ingredient in the proof of Theorem \ref{trigthm} is the following:
\begin{lemma}\label{sepoddholes}
Any plane triangulation that has an odd hole contains
either an induced loose odd wheel or a separating triangle.
\end{lemma}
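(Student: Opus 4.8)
The plan is to take a plane triangulation $T$ with an odd hole $C$, and argue that if $T$ has no separating triangle, then the interior of $C$ (together with $C$ itself) must contain a loose odd wheel. First I would fix an odd hole $C$ and choose it so that the region of the plane it bounds is, say, inclusion-minimal among all odd holes of $T$; this gives us some control, namely that no odd hole lies strictly inside. Since $T$ is a triangulation, the closed disc bounded by $C$ is itself triangulated, and since $C$ is an induced cycle of length at least $5$, this inner disc is non-trivial: there is at least one vertex or chord inside. Because $C$ is induced, there are no chords, so there is at least one interior vertex.

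The next step is to understand the structure of the triangulated disc bounded by $C$ when it has no separating triangle. A standard fact is that a triangulated disc with no separating triangle and no chord on the boundary either is a single triangle (impossible here, $|C|\geq 5$) or has an interior vertex $v$ adjacent to "many" boundary vertices; more usefully, one wants a vertex $v$ interior to $C$ whose neighbourhood on $C$ is large and whose link is a cycle. I would try to pick an interior vertex $v$ all of whose neighbours lie on $C$ — equivalently, an interior vertex "closest" to $C$ in an appropriate sense, e.g. an interior vertex on the boundary of the smaller triangulated disc obtained after removing the outer layer; alternatively, if every interior vertex has an interior neighbour, one peels off layers and recurses. If $v$ has $\geq 3$ neighbours on $C$, then $C$ together with $v$ forms a loose wheel, and it remains to check the parity condition: the segments of this loose wheel are exactly the arcs of $C$ between consecutive neighbours of $v$; their lengths sum to $|C|$, which is odd, and there are, say, $d$ of them. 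Since $v$ plus each segment-arc that has even length can be $t$-contracted, while we need the count of odd segments to be odd and $\geq 3$ — but an odd number of odd summands is forced precisely because the total $|C|$ is odd, so the number of odd segments is odd; we just need it to be at least $3$. If it is only $1$, then $v$ has exactly one odd segment and the rest even; but then $v$ together with the two neighbours bounding the lone odd segment, plus that segment, forms a shorter closed walk that is actually an odd hole strictly inside $C$ (after contracting the even parts it is an odd cycle, and planarity plus no separating triangle should let us realise it as an induced odd hole), contradicting minimality — so we get the needed $\geq 3$.

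The main obstacle, I expect, is the case analysis forcing an interior vertex with at least three neighbours on $C$ and, more delicately, ensuring the resulting loose wheel is \emph{induced}: we need that $v$ has no other neighbours (on $C$ or interior) creating chords of the rim, and that the segments, as subpaths of $C$, are chordless — the latter is automatic since $C$ is induced, but the former requires choosing $v$ carefully, e.g. as an interior vertex whose neighbourhood is entirely on $C$. Showing such a $v$ exists is where the absence of separating triangles does the real work: if an interior vertex $u$ has two adjacent neighbours $a,b$ on $C$ with $u$-$a$-$b$ not a face, that triangle $uab$ separates; iterating this kind of argument, one peels the triangulation down until reaching a vertex whose link is a cycle contained in $C$. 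Handling the degenerate possibility that after peeling we reach a short configuration (like $C$ being length $5$ with a single interior vertex adjacent to all of it, which is exactly $W_5$, an odd wheel, hence a loose odd wheel) should be a clean base case. Finally I would assemble these pieces: minimality of $C$ rules out the "one odd segment" obstruction, absence of separating triangles yields a good interior vertex $v$, and $v$ together with $C$ is the desired induced loose odd wheel.
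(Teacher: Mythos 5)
Your overall strategy---fix an odd hole $C$ minimising its interior, find an interior vertex $v$ with at least three neighbours on $C$, and then argue about the parity of the segments---is the same as the paper's, but the two places you yourself flag as ``the main obstacle'' are genuine gaps, and the sketches you give would not close either of them. First, the case of exactly one odd segment $P$: your proposed shorter odd hole $v\,v_1\,P\,v_2\,v$ only works when $P$ has length at least $3$. If the lone odd segment is a single edge $v_1v_2$ (which certainly can happen), the cycle $vv_1v_2$ is a triangle, hence a face since separating triangles are excluded, and no contradiction with the minimality of $C$ arises. Your parenthetical fallback (``after contracting the even parts it is an odd cycle\dots'') is not available: you need an \emph{induced} loose odd wheel, so $t$-contraction plays no role inside this lemma. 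The paper instead handles this case by walking along the link of $v$ (the face boundary of $T-v$) to obtain induced paths $Q_1,Q_2$ joining the ends of $P$ to the far ends of the two neighbouring segments, showing each $Q_i$ must be odd (an even $Q_i$ would yield an odd hole with smaller interior), and assembling $Q_1$, $Q_2$, $P$ and an arc of $C$ into a new odd hole that together with $v$ is an induced loose odd wheel.

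Second, and more seriously, the existence of an interior vertex with at least three neighbours on $C$ is not a ``standard fact'' about triangulated discs without separating triangles, and no peeling argument that uses only the absence of separating triangles can produce it. Concretely: the icosahedron is a plane triangulation with no separating triangle, the link of any vertex is an induced $5$-cycle $C$, and on the side of $C$ away from that vertex every interior vertex has at most two neighbours on $C$ (five vertices have exactly two, the antipodal vertex has none). So your claimed vertex need not exist for an arbitrary odd hole; the minimality of $C$ must enter essentially (in the icosahedron the inner $5$-cycle is an odd hole with a smaller interior, so that $C$ was not minimal). Establishing this claim is the bulk of the paper's proof: assuming every interior vertex has at most two neighbours on $C$, it constructs for each edge $v_iv_{i+1}$ of $C$ a common interior neighbour $w_i$, links consecutive $w_i$'s by odd induced paths $R_i$ along the links of the $v_{i+1}$'s, obtains an odd cycle $R$ of length at least $5$ with fewer interior vertices than $C$, and then uses the minimality of $C$ to force chords of $R$ and of two further auxiliary odd cycles, whose interaction is contradictory. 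Your proposal contains no substitute for this argument, so the key claim remains unproven.
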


We now use this to prove Theorem \ref{trigthm}:

\begin{proof}[Proof of Theorem \ref{trigthm}] 
\eqref{M1}$\Rightarrow$\eqref{M2} 
follows directly from Lemma~\ref{wheelem}.

To show \eqref{M2}$\Rightarrow$\eqref{M3}, 
suppose that $T$ is a minimal counterexample, that is, 
a plane triangulation without any induced loose odd wheel, which is not perfect and with 
$\card{V(T)}$ minimum. Since $K_4$ is an odd wheel, $T$ is $K_4$-free. 
By Lemma~\ref{perflem}, $T$ must have a separating odd hole,
and thus, by Lemma~\ref{sepoddholes}, also a separating triangle $X$. 
Clearly, $X$ yields two proper induced subgraphs as in Lemma \ref{cliquecutset}, which are perfect. But then $T$ is perfect, contrary to our assumption.


Finally, \eqref{M3}$\Rightarrow$\eqref{M1} follows from the polyhedral characterisation of perfect graphs and the obvious fact that perfect graphs cannot contain odd holes (see Section \ref{sec:defs}). 
\end{proof}

We now prove Lemma~\ref{sepoddholes}. 
The \emph{interior} of a cycle $C$ of a plane graph is the bounded component of $\mathbb R^2\sm C$.

\begin{proof}[Proof of Lemma \ref{sepoddholes}]
Let $T$ be a plane triangulation that contains an odd hole 
but no separating triangle. 
We need to show that $T$ contains an induced loose odd wheel.
Let $C$ be an odd hole of $T$ with a minimal number of vertices
in the interior of $C$.
 We claim the following:
\begin{equation}\label{degree}
\text{\emph{Some vertex in the interior of $C$ has at least three neighbours in $C$.}}
\end{equation}

We first show how to deduce that $T$ contains an induced loose odd wheel from~\eqref{degree}.
Let $v$ be a vertex in the interior of $C$ with at least three neighbours in $C$,
and denote by $W$  the induced loose wheel formed by $v$ and $C$. 
If $W$ is a loose odd wheel, we are obviously done. So, assume that $W$ has fewer than three
odd segments. 
Since $C$ is odd, this means that  $W$ has exactly one odd segment,~$P$ say. 

Let $v_1$ and $v_2$ be the two endvertices of $P$. 
For $i\in\set{1,2}$, there is exactly one segment $P_i$ which has $v_i$ as an endvertex and is not $P$. 
Let $v_i'$ be the other endvertex of $P_i$ (note that $v_1'$ and $v_2'$ are identical if $v$ has exactly
three neighbors on $C$).

Let $i\in\set{1,2}$ and let $D_i$ be the cycle formed by $vv_i$, $vv_{i}'$ and $P_i$. 
Let $F$ be the face boundary of the face of $T-v$ containing $v$. Note that $v$ is adjacent
to every vertex of $F$, as $T$ is a triangulation, and that $v_i$ and $v_i'$ lie in $F$. 
In particular, $F$ contains a path $Q_i$ from $v_i$ to $v_i'$ 
so that all its inner vertices lie in the interior of $D_i$.
Observe that  $Q_i$ is induced: a chord $xy$ of $Q_i$ 
would yield a separating triangle $xyv$, which we had excluded.
Since $v_i$ and $v_i'$ are not neighbours (otherwise $P_i$ would not have even length), 
it follows that the induced path $Q_i$ has length at least~$2$.
(Here, we also use that as a segment of the hole $C$, the path $P_i$ is induced.)

Now, if $Q_i$ is even then $Q_i$ and the odd path of $C$ joining $v_i$ to $v_i'$ 
form an odd hole of $T$ with fewer vertices in the interior than $C$,
contradicting the minimality of $C$. 

Hence, both $Q_1$ and $Q_2$ must be odd. Since $P$ is the unique odd segment of $W$, this implies that: $Q_1$, $Q_2$, the even path of $C$ joining $v_1'$ to $v_2'$ and $P$  form together an odd hole of $T$. We can see directly that the odd hole together with $v$ forms a loose odd wheel, and we are done.

\medskip

This shows that the lemma can be deduced from~\eqref{degree}. All that remains is to prove~\eqref{degree}.
For this purpose, suppose~\eqref{degree} to be false. That is, suppose 
that every vertex in the interior of $C$ has at most two neighbours on $C$.

Let $N=\card{V(C)}$ and let $v_1,\ldots,v_{N}$ be a circular ordering of the vertices of~$C$. 
In what follows, indices are always taken modulo $N$.

Since $T$ is a triangulation, 
the vertices $v_i$ and $v_{i+1}$ have a common neighbour $w_i$ in the interior of $C$ 
(for each $i\in\set{1,\ldots,N}$). By assumption, $w_i$ has no other neighbour in $C$ 
besides 
$v_i$ and $v_{i+1}$.

Observe that:
\begin{equation}\label{onlyw}
\begin{minipage}[c]{0.8\textwidth}\em
no vertex in the interior of $C$, except for $w_1,\ldots,w_N$,
has  two neighbours in $C$. 
\end{minipage}\ignorespacesafterend 
\end{equation} 
Indeed, 
if $u$ is another vertex in 
the interior of $C$ with at least (and then exactly) two neighbors $v_i$ and $v_j$ on $C$, then $v_i$ and $v_j$ cannot be consecutive on $C$ (because $T$ does not have a separating triangle). Therefore, the edges $uv_i$, $uv_j$ and the odd path of $C$ joining $v_i$ to $v_j$ form an odd hole with fewer vertices in the interior than~$C$,
which is impossible.

Next, let us see that:
\begin{equation}\label{defRi}
\begin{minipage}[c]{0.8\textwidth}\em
the neighbours of $v_{i+1}$ in the interior of $C$ form an  induced path $R_i$
from $w_{i}$ to $w_{i+1}$, for every $i\in\set{1,\ldots,N}$.
\end{minipage}\ignorespacesafterend 
\end{equation} 

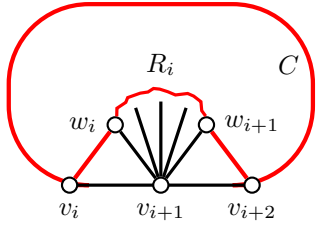
\begin{figure}[ht]
\centering
\begin{tikzpicture}[scale=0.8]

\def\xstep{1.5}
\def\offset{1}


\draw[rededge,rounded corners=30pt] (\offset+2*\xstep,0) -- (2*\offset+2*\xstep,0) -- (2*\offset+2*\xstep,3) -- (0,3) -- (0,0) -- (\offset,0);
\draw[hedge] (\offset,0) -- (\offset+2*\xstep,0);

\node at (2*\offset+2*\xstep-0.4,2) {$C$};

\node[hvertex,label=below:$v_i$] (vi) at (\offset,0){};
\node[hvertex,label=below:$v_{i+1}$] (vi+1) at (\offset+1*\xstep,0){};
\node[hvertex,label=below:$v_{i+2}$] (vi+2) at (\offset+2*\xstep,0){};

\node[hvertex,label=left:$w_i$] (wi) at (\offset+0.5*\xstep,1){};
\node[hvertex,label=right:$w_{i+1}$] (wi+1) at (\offset+1.5*\xstep,1){};

\draw[rededge] (vi) -- (wi);
\draw[hedge] (wi) -- (vi+1);
\draw[hedge] (vi+1) -- (wi+1);
\draw[rededge] (wi+1) -- (vi+2);

\draw[pathedge,red,bend left=80] (wi) to 
node[near start] (p1){} node[midway] (p2){} node[near end] (p3){}
(wi+1); 

\draw[hedge] (vi+1) edge (p1) edge (p2) edge (p3);

\node at (\offset+\xstep,2) {$R_i$};

\end{tikzpicture}
\caption{The path $R_i$ and the cycle $C'$ (in red)}\label{Rifig}
\end{figure}

Indeed, consider the face of $T-v_{i+1}$ containing $v_{i+1}$. Its face boundary consists
of neighbours of $v_{i+1}$ only (as $T$ is a triangulation), and contains a path $R_i'$ from 
$v_{i}$ to $v_{i+2}$ whose inner vertices lie in the interior of $C$. Moreover, $R_i'$ is induced since
any chord $xy$ would yield a separating triangle $xyv_{i+1}$. 

Since $w_{i}v_{i}$ and $w_{i+1}v_{i+2}$ are edges of $T$,
 $w_{i}$ succeeds
$v_{i}$ and $w_{i+1}$ precedes $v_{i+2}$ in $R_i'$.
Choose $R_i$ as the subpath of $R_i'$ from $w_{i}$ to $w_{i+1}$ (see Figure~\ref{Rifig}). 
This proves~\eqref{defRi}.

Let us note rightaway that for each $i\in\set{1,\ldots,N}$:
\begin{equation}\label{Riind}
\emtext{
inner vertices of $R_i$ have no other neighbour in $C$ than $v_{i+1}$.
}
\end{equation}
Suppose that $r$ is an inner vertex of $R_i$ that is adjacent to $v_j\neq v_{i+1}$. 
Then $r$ has two neighbours in $C$ and thus, by~\eqref{onlyw}, has to be 
one of $w_1,\ldots, w_N$. Since for every $l\in\set{1,\ldots, N}$, $w_\ell$ has exactly $v_\ell$ and $v_{\ell+1}$
as neighbours in $C$, it follows that $r=w_i$ or $r=w_{i+1}$. This is impossible 
since  $r$ is an inner vertex of $R_i$.

Next:
\begin{equation}\label{oddlength}
\emtext{
each  $R_i$ has odd length.
}
\end{equation}
If $R_i$ was even then $C-v_{i+1}$, $R_i$ and the edges $v_iw_i$ and $w_{i+1}v_{i+2}$ would form together 
an odd hole $C'$ (\eqref{Riind} shows that $C'$ is induced).
Clearly, $C'$ has 
  fewer vertices in its interior than $C$, and this contradicts the minimality of $C$; see Figure~\ref{Rifig}.

We need one more fact. For each $i,j\in\set{1,\ldots, N}$:
\begin{equation}\label{wiwj}
\emtext{
if $w_iw_j\in E(T)$ then $i=j+1$ or $j=i+1$.
}
\end{equation}
Suppose that $i\notin\{j-1,j+1\}$. Then, $C$ contains two disjoint paths, 
one between $v_{i+1}$ and $v_j$, and the other between $v_{j+1}$ and $v_i$.
One of these paths has even length, and in particular length at least~$2$;
denote the path by $P$. We extend $P$ by  the two edges between its endvertices
and $w_i$ and $w_j$, and finally add the edge $w_iw_j$. The resulting cycle
is induced and of odd length at least~$5$, which is impossible as it 
contradicts  the minimal choice of $C$.

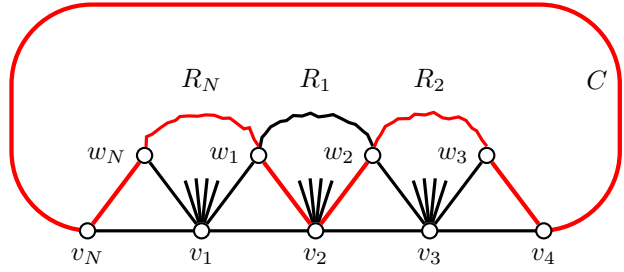
\begin{figure}[ht]
\centering
\begin{tikzpicture}[scale=1]

\def\xstep{1.5}
\def\offset{1}


\draw[rededge,rounded corners=30pt] (\offset+4*\xstep,0) -- (2*\offset+4*\xstep,0) -- (2*\offset+4*\xstep,3) -- (0,3) -- (0,0) -- (\offset,0);
\draw[hedge] (\offset,0) -- (\offset+4*\xstep,0);

\node at (2*\offset+4*\xstep-0.3,2) {$C$};

\node[hvertex,label=below:$v_N$] (vN) at (\offset,0){};
\node[hvertex,label=below:$v_1$] (v1) at (\offset+1*\xstep,0){};
\node[hvertex,label=below:$v_2$] (v2) at (\offset+2*\xstep,0){};
\node[hvertex,label=below:$v_3$] (v3) at (\offset+3*\xstep,0){};
\node[hvertex,label=below:$v_4$] (v4) at (\offset+4*\xstep,0){};

\node[hvertex,label=left:$w_N$] (wN) at (\offset+0.5*\xstep,1){};
\node[hvertex,label=left:$w_1$] (w1) at (\offset+1.5*\xstep,1){};
\node[hvertex,label=left:$w_2$] (w2) at (\offset+2.5*\xstep,1){};
\node[hvertex,label=left:$w_3$] (w3) at (\offset+3.5*\xstep,1){};

\draw[hedge] (v1) edge +(72:0.7) edge +(84:0.7) edge +(96:0.7) edge +(108:0.7);
\draw[hedge] (v2) edge +(72:0.7) edge +(84:0.7) edge +(96:0.7) edge +(108:0.7);
\draw[hedge] (v3) edge +(72:0.7) edge +(84:0.7) edge +(96:0.7) edge +(108:0.7);

\draw[rededge] (vN) -- (wN);
\draw[hedge] (v1) -- (wN);
\draw[hedge] (v1) -- (w1);
\draw[rededge] (v2) -- (w1);
\draw[rededge] (v2) -- (w2);
\draw[hedge] (v3) -- (w2);
\draw[hedge] (v3) -- (w3);
\draw[rededge] (v4) -- (w3);

\draw[pathedge,red,bend left=80] (wN) to (w1); 
\draw[pathedge,bend left=80] (w1) to (w2); 
\draw[pathedge,red,bend left=80] (w2) to (w3); 

\node at (\offset+\xstep,2) {$R_N$};
\node at (\offset+2*\xstep,2) {$R_1$};
\node at (\offset+3*\xstep,2) {$R_2$};

\end{tikzpicture}
\caption{The cycle $C_1$ (in red)}\label{finalargumentfig}
\end{figure}

\medskip
Now, consider the walk
$R:=w_1R_1w_2R_2w_3\ldots w_NR_Nw_1$. By~\eqref{defRi} and~\eqref{Riind}, 
$R$ is a cycle of length
at least~$5$. By~\eqref{oddlength}, it is odd. 
Since there are fewer vertices in the interior of $R$ than in the interior of $C$, 
the minimality of $C$ ensures that $R$ has a chord.

As every $R_i$ is induced (by~\eqref{defRi})
and as vertices $w_i$ and $w_j$ may be adjacent only if $|i-j|= 1$ (as stated by~\eqref{wiwj}),
such a chord must have at least one endvertex that is an inner vertex
of some $R_i$, say $R_1$. In particular, $R_1$ is not an edge.

Let $C_1$ be  the  cycle formed by $v_2w_2$, $R_2$,  $w_3 v_4$, $v_Nw_N$, $R_N$, $w_1v_2$ and the 
odd $v_4$--$v_N$~path of $C$ (see Figure~\ref{finalargumentfig}). 
By~\eqref{oddlength}, $C_1$ has odd length. 
Obviously, $C_1$ is not a triangle and has fewer vertices in its interior than $C$. Hence, it
must have a chord $xy$. 

By~\eqref{Riind} and as the $R_i$ are induced, 
this is only possible if one of $x$ and $y$, say $x$, lies in $R_2$ while $y$ 
lies in $R_N$. Furthermore, $xy$ lies in the interior of $R$ (except its ends).

It is easy to check that $\set{x,y}\cap\set{w_1,w_2}\neq\es$: otherwise, $v_1xyv_3v_2v_1$ is an odd hole (of length~$5$), contradicting the minimality of $C$.

Therefore, and by symmetry, we may assume without loss of generality that $x=w_2$. 
Since $R_1$ is induced and not an edge, 
this implies $y\neq w_1$. 
For later use, we note the following trivial consequence  (see Figure~\ref{anotherfig}):
\begin{equation}\label{cycleD}
\begin{minipage}[c]{0.8\textwidth}\em
the interior of the cycle $D=v_1v_2w_2yv_1$ contains $R_1-w_2$ but no vertex of $C$.  
\end{minipage}\ignorespacesafterend 
\end{equation}

\begin{figure}[ht]
\centering
\begin{tikzpicture}[scale=1]

\def\xstep{1.5}
\def\offset{1}


\node[hvertex,label=left:$v_{N-1}$] (vN-1) at (0,1){};

\node[hvertex,label=below:$v_N$] (vN) at (\offset,0){};
\node[hvertex,label=below:$v_1$] (v1) at (\offset+1*\xstep,0){};
\node[hvertex,label=below:$v_2$] (v2) at (\offset+2*\xstep,0){};
\node[hvertex,label=below:$v_3$] (v3) at (\offset+3*\xstep,0){};
\node[hvertex,label=below:$v_4$] (v4) at (\offset+4*\xstep,0){};

\begin{scope}[on background layer]
\draw[rededge,rounded corners=30pt] (\offset+4*\xstep,0) -- (2*\offset+4*\xstep,0) -- (2*\offset+4*\xstep,3) -- (0,3) -- (vN-1);
\draw[hedge,rounded corners=30pt] (vN-1) -- (0,0) -- (v3);
\end{scope}

\draw[rededge] (v3) -- (v4);

\node at (2*\offset+4*\xstep-0.3,2) {$C$};

\node[hvertex,label=left:$w_N$] (wN) at (\offset+0.5*\xstep,1){};
\node[hvertex,label=left:$w_1$] (w1) at (\offset+1.5*\xstep,1){};
\node[hvertex,label=left:$w_2$] (w2) at (\offset+2.5*\xstep,1){};
\node[hvertex,label=left:$w_3$] (w3) at (\offset+3.5*\xstep,1){};

\draw[hedge] (v1) edge +(70:0.7) edge +(80:0.7)  edge +(108:0.7);
\draw[hedge] (v2) edge +(72:0.7) edge +(84:0.7) edge +(96:0.7) edge +(108:0.7);
\draw[hedge] (v3) edge +(72:0.7) edge +(84:0.7) edge +(96:0.7) edge +(108:0.7);

\draw[hedge] (vN) -- (wN);
\draw[rededge] (v1) -- (wN);
\draw[rededge] (v1) -- (w1);
\draw[hedge] (v2) -- (w1);
\draw[hedge] (v2) -- (w2);
\draw[rededge] (v3) -- (w2);
\draw[hedge] (v3) -- (w3);
\draw[hedge] (v4) -- (w3);

\node[hvertex,label=below left:$y$] (y) at (\offset+1*\xstep,1.5){};

\begin{scope}[on background layer] 
\draw[hedge, rounded corners=5pt] (w2) -- ++(0,0.7) -- ++(-1.5*\xstep,0) -- (y);
\end{scope}

\draw[hedge] (v1) -- (y);

\draw[pathedge,bend left=40] (wN) to (y);
\draw[pathedge,bend left=40] (y) to (w1); 

\draw[pathedge,red,bend left=80] (w1) to (w2); 
\draw[pathedge,bend left=80] (w2) to (w3); 

\node at (\offset+\xstep,2) {$R_N$};
\node at (\offset+2*\xstep,2) {$R_1$};
\node at (\offset+3*\xstep,2) {$R_2$};

\node[hvertex] (wN-1) at (0.7,1.5){};

\draw[rededge] (vN-1) -- (wN-1);
\draw[hedge] (vN) -- (wN-1);
\draw[pathedge,red,bend left=30] (wN-1) to (wN); 
\node at (\offset+0.5,1.7) {$R_{N-1}$};

\end{tikzpicture}
\caption{The cycle $C_N$ (in red)}\label{anotherfig}
\end{figure}
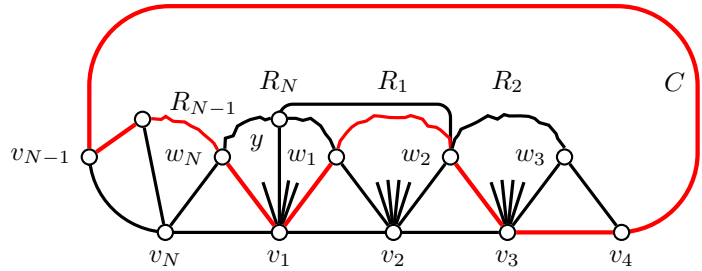

Observe 
moreover that $y\neq w_N$ by~\eqref{wiwj}.
In particular, this means that $y$ is an inner vertex of $R_N$, which thus 
cannot be an edge either.
Let $C_N$ be the odd cycle formed 
by  $v_1w_1$, $R_1$, $w_{2}v_{3}$, $v_{N-1}w_{N-1}$, $R_{N-1}$, $w_Nv_1$, and the 
odd $v_3$--$v_{N-1}$~path of $C$. That is, we construct $C_N$ in the exact way as
$C_1$, only for $R_N$ instead of $R_1$. 

As for $C_1$, the cycle $C_N$ has a chord
that joins a vertex  $s$ of  $R_{N-1}$ 
to a vertex $t$ of $R_1$. Moreover, we deduce in the same way as for $C_1$ that
 $\set{s,t}\cap\set{w_1,w_N}\neq\es$. 

Since we can reach $C$ from $s$ without
meeting the cycle $D$ (using the path $R_{N-1}w_{N-1}v_{N-1}$), it follows that $s$ does not lie in the interior of~$D$. On the other hand $R_1-w_2$ lies in the interior of $D$ (see~\eqref{cycleD}), thus $t=w_2$. As $\set{s,t}\cap\set{w_1,w_N}\neq\es$, we must have $s=w_N$, but this  contradicts~\eqref{wiwj}. 
We have reached the final contradiction that proves~\eqref{degree} and therefore
the lemma.

\end{proof}

\section{Conclusion}

Theorem~\ref{hptrigthm}, that perfection and $h$-perfection are the same in 
plane triangulations, puts an end to further investigations of $h$-perfect 
plane triangulations: most of the interesting questions can be answered
by appealing to their perfection.

In the larger class of planar graphs, however, perfection and $h$-perfection are no longer
equivalent. Evidently, a $5$-cycle is $h$-perfect but not perfect. 

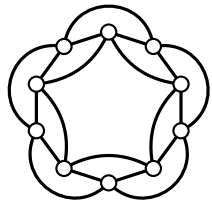
\begin{figure}[ht]
\centering
\begin{tikzpicture}
\begin{scope}
\def\angle{360/10}
\def\radius{1cm}
\foreach \i in {0,2,4,6,8}{
  \begin{scope}[on background layer] 
    \draw[hedge] (90+\angle*\i:\radius) -- (90+\angle+\angle*\i:\radius);
    \draw[hedge,bend left=30] (90+\angle*\i:\radius) to (90+2*\angle+\angle*\i:\radius);
  \end{scope}
  \node[hvertex] (v\i) at (90+\angle*\i:\radius){};
}
\foreach \i in {1,3,5,7,9}{
  \begin{scope}[on background layer] 
    \draw[hedge] (90+\angle*\i:\radius) -- (90+\angle+\angle*\i:\radius);
    \draw[hedge,radius=1] (90+\angle*\i:\radius) 
       .. controls (90+0.5*\angle+\angle*\i:1.6*\radius) and (90+1.5*\angle+\angle*\i:1.6*\radius) ..
          (90+2*\angle+\angle*\i:\radius);
  \end{scope}
  \node[hvertex] (v\i) at (90+\angle*\i:\radius){};
}
\end{scope}
\end{tikzpicture}
\caption{The graph $C_{10}^2$.}\label{k4}
\end{figure}

Planarity is closed under taking $t$-contractions and induced subgraphs. 
Therefore, it might  be possible to characterise those 
$t$-imperfect planar graphs that are minimal with respect to these two operations.
So far, the known planar minimal obstructions are the odd wheels and $C_{10}^{2}$ (see Figure \ref{k4}). 
Are these the only ones?\sloppy


While general planar graphs obviously offer  less structure than triangulations, 
some arguments might still be adapted. In particular, 
the key argument to derive Theorem~\ref{hptrigthm} from Theorem~\ref{trigthm} 
 remains valid, and thus yields:
\begin{proposition}
	A planar graph is $h$-perfect if and only if each of its $K_4$-free induced subgraphs is $t$-perfect.
\end{proposition}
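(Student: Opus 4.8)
The plan is to mimic exactly the argument used to deduce Theorem~\ref{hptrigthm} from Theorem~\ref{trigthm}, observing that it uses only planarity (not the triangulation hypothesis) together with Lemma~\ref{cliquecutset}. First I would prove the easy direction: if $G$ is a planar graph all of whose $K_4$-free induced subgraphs are $t$-perfect, then $G$ is $h$-perfect. By the polyhedral definitions recalled in Section~\ref{sec:defs}, it suffices to show that $G$ contains no obstruction to $h$-perfection, and since $h$-perfection is inherited by induced subgraphs, we may assume $G$ itself is minimally $h$-imperfect. If $G$ is $K_4$-free it is $t$-perfect by hypothesis, hence $h$-perfect, a contradiction. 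So $G$ contains a $K_4$; as $G$ is planar, some triangle $X$ of that $K_4$ separates $G$ (this is the point where planarity, via the Jordan curve theorem, is used — in a plane drawing of $K_4$ one of the four triangles has a vertex of $K_4$ inside and the rest of $G$ outside, or $G=K_4$ itself). If $G=K_4$ it is perfect, hence $h$-perfect. Otherwise $X$ gives a decomposition $G=G_1\cup G_2$ with $G_1\cap G_2=X$ a clique, and both $G_i$ proper induced subgraphs; by minimality each $G_i$ is $h$-perfect. A clique cutset argument for $h$-perfection (entirely analogous to Lemma~\ref{cliquecutset}, and standard for $h$-perfection since clique-sum glueing along a clique preserves integrality of the stable set polytope — see Section~\ref{sec:defs}) then gives that $G$ is $h$-perfect, contradicting minimality.

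For the converse direction, suppose $G$ is $h$-perfect and planar, and let $H$ be a $K_4$-free induced subgraph; I must show $H$ is $t$-perfect. Since $h$-perfection passes to induced subgraphs, $H$ is $h$-perfect, and being $K_4$-free it is $t$-perfect by the equivalence recalled in Section~\ref{sec:defs} (``$t$-perfect graphs are exactly the $h$-perfect $K_4$-free graphs''). This direction is essentially immediate.

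The only genuinely delicate point is the clique-cutset step for $h$-perfection in the forward direction. Strictly speaking the excerpt only states Lemma~\ref{cliquecutset} for perfection, so I would either invoke the well-known fact that gluing two graphs with integral stable set polytopes along a common clique yields a graph with integral stable set polytope (which is exactly the polyhedral statement underlying both Lemma~\ref{cliquecutset} and its $h$-perfect analogue), or alternatively route the argument through the description already given: in the case $G\neq K_4$, the two pieces $G_1,G_2$ are again planar and strictly smaller, so by an outer induction on $|V(G)|$ each $G_i$ is $h$-perfect iff each of its $K_4$-free induced subgraphs is $t$-perfect, matching the hypothesis on $G$. The main obstacle, then, is just to state the clique-cutset preservation of $h$-perfection cleanly; once that is in place the proof is a short induction paralleling the proof of Theorem~\ref{hptrigthm} verbatim, with ``triangulation'' replaced by ``planar graph'' and the separating-triangle observation justified by the planarity of $K_4$.

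\begin{proof}[Proof of Proposition]
If $G$ is $h$-perfect and $H$ is a $K_4$-free induced subgraph, then $H$ is $h$-perfect and $K_4$-free, hence $t$-perfect (Section~\ref{sec:defs}).

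Conversely, suppose every $K_4$-free induced subgraph of the planar graph $G$ is $t$-perfect, and argue by induction on $|V(G)|$. If $G$ is $K_4$-free it is $t$-perfect, hence $h$-perfect. Otherwise $G$ contains a $K_4$; since $G$ is planar, either $G=K_4$, which is perfect and so $h$-perfect, or one triangle $X$ of this $K_4$ is separating. In the latter case $X$ yields proper induced subgraphs $G_1,G_2$ with $G=G_1\cup G_2$ and $G_1\cap G_2=X$ a clique; both $G_i$ are planar, and every $K_4$-free induced subgraph of $G_i$ is one of $G$, hence $t$-perfect, so by induction $G_1$ and $G_2$ are $h$-perfect. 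As $h$-perfection is preserved under gluing along a clique (the polyhedral analogue of Lemma~\ref{cliquecutset}), $G$ is $h$-perfect.
\end{proof}
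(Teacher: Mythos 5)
Your proof is correct and follows essentially the same route the paper intends: the backward direction of Theorem~\ref{hptrigthm}'s proof (separating triangle of a $K_4$ via planarity, then a clique-cutset decomposition and induction), with the forward direction immediate from heredity of $h$-perfection. You rightly make explicit the one point the paper leaves implicit, namely that Lemma~\ref{cliquecutset} must be replaced by its $h$-perfect analogue (gluing along a clique preserves integrality of the stable set polytope), which is indeed the standard fact needed here.
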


Therefore, as for plane triangulations, any characterisation of $t$-perfection in planar graphs in terms of forbidden induced subgraphs can be directly extended to $h$-perfection.

\bibliographystyle{amsplain}
\bibliography{bibliography}

\vfill
\small
\vskip2mm plus 1fill
\noindent
Version \today
\bigbreak

\noindent
Yohann Benchetrit
{\tt <yohann.benchetrit@ulb.ac.be>}\\
Universit\'e libre de Bruxelles, Belgium\\[3pt]
Henning Bruhn
{\tt <henning.bruhn@uni-ulm.de>}\\
Universit\"at Ulm, Germany\\[3pt]

\end{document}